\def\RR{{\bf R}}
\numberwithin{equation}{section}
\newtheorem{Thm}{Theorem} 
\newtheorem{Prop}[Thm]{Proposition}
\theoremstyle{definition}
\title{On uncrossing games for skew-supermodular functions}
\author{Hiroshi HIRAI \\
Department of Mathematical Informatics, \\
Graduate School of Information Science and Technology,   \\
University of Tokyo, Tokyo, 113-8656, Japan.\\
\texttt{\normalsize hirai@mist.i.u-tokyo.ac.jp}\\
}
\begin{document}
\maketitle

\begin{abstract}
	In this note, 
	we consider the uncrossing game for a skew-supermodular function $f$, 
	which is a two-player game with players, Red and Blue, and abstracts 
	the uncrossing procedure in the cut-covering linear program associated with $f$.
	Extending the earlier results by Karzanov 
	for $\{0,1\}$-valued skew-supermodular functions, 
	we present an improved polynomial time strategy for Red to win,  
	and give a strongly polynomial time uncrossing procedure 
	for dual solutions of the cut-covering LP as its consequence.
	We also mention its implication on the optimality of laminar solutions.
\end{abstract}
\begin{center}
	Keywords: uncrossing game, skew-supermodular function, cut-covering LP
\end{center}

\section{Introduction}
Let $V$ be a finite set, and let ${\cal S}(V)$ denote the set of all bi-partitions of $V$. 
A member $\{X,V \setminus X\}$ of ${\cal S}(V)$ 
is also denoted by $X$ or $V \setminus X$ (if no confusion occurs).
A pair $X,Y$ of members in ${\cal S}(V)$ is said be {\em crossing} if 
$X \cap Y$, $V \setminus (X \cup Y)$, $X \setminus Y$, and $Y \setminus X$ are all nonempty.
A family ${\cal F} \subseteq {\cal S}(V)$  is called {\em laminar}  if 
there is no crossing pair in ${\cal F}$.
A function $f:{\cal S}(V) \to \RR_+$ is called {\em skew-supermodular} 
if it satisfies
\[
f(X) + f(Y) \leq \max\{f(X \cap Y) + f(X \cup Y), f(X \setminus Y) + f(Y \setminus X)  \}
\]
for every pair $X,Y \in {\cal S}(V)$.
For a skew-supermodular function $f$, 
we consider the following game involving two players {\em Blue} and {\em Red}, 
which we call the {\em uncrossing game}.
\begin{description}
	\item[Uncrossing game for skew-supermodular function $f$]
	\item[Input:] A family ${\cal F} \subseteq {\cal S}(V)$.
	\item[Step 1:] If ${\cal F}$ is laminar, then Red wins; the game terminates.
	\item[Step 2:] Otherwise, Red chooses a crossing pair $(X,Y)$ in ${\cal F}$,  
	chooses $(X',Y') \in \{ (X \cap Y, X \cup Y), (X \setminus Y, Y \setminus X)\}$ 
	satisfying 
	\[
	f(X) + f(Y) \leq f(X') + f(Y'),
	\]
	and replaces $X, Y$ by $X',Y'$ in ${\cal F}$.
	\item[Step 3:] Blue returns one of $X,Y$ to ${\cal F}$. Go to {\bf step 1}.
\end{description}
Here we assume that Red has an evaluation oracle of $f$.

The uncrossing game abstracts the uncrossing procedure in combinatorial optimization, 
and was originally introduced by Hurkens, Lov\'asz, Schrijver and Tardos~\cite{HLST87}. 
The original formulation is the following. 
For an input family ${\cal F}$ of subsets of $V$, 
if ${\cal F}$ is a chain, then Red wins.
Otherwise Red chooses an incomparable pair $(X,Y)$ in ${\cal F}$ 
and replaces $(X,Y)$ by $(X \cap Y, X \cup Y)$ in ${\cal F}$.
Then Blue returns one of $X$ and $Y$ to ${\cal F}$.
Hurkens, Lov\'asz, Schrijver and Tardos showed that there is a strategy 
(i.e., a way of choosing $(X,Y)$) for Red to win 
after a polynomial number $O(|V||{\cal F}|)$ of iterations. 
Karzanov~\cite{Kar96} considered a symmetric generalization on cross-closed families.
Here a family ${\cal S} \subseteq {\cal S}(V)$ is called {\em cross-closed} 
if for $X,Y \in {\cal S}$,  
$X \cap Y, X \cup Y$ belong to ${\cal S}$ or $X \setminus Y, Y \setminus X$ belong to ${\cal S}$.
In his uncrossing game, 
the input is a family ${\cal F} \subseteq {\cal S}$, and  
Red chooses a crossing pair $(X,Y)$ in ${\cal F}$ and 
chooses $(X',Y') \in \{ (X \cap Y, X \cup Y), (X \setminus Y, Y \setminus X)\}$ 
with $X',Y' \in {\cal S}$.
As was noticed by Karzanov, 
the original uncrossing game can be viewed as a special case of 
the cross-closed family on $V \cup \{s,t\}$ consisting of $X$ 
with $|X \cup \{s,t\}| = 1$.
He showed that there is a strategy for Red to win 
after $O(|V|^4|{\cal F}|)$ iterations. 

A cross-closed family ${\cal S}$ 
is naturally identified with a $\{0,1\}$-valued skew-supermodular function $f$
defined by $f(X) := 1$ for $X \in {\cal S}$ and $f(X) := 0$ otherwise.
By this identification, the uncrossing game on the cross-closed family ${\cal S}$ 
reduces to our setting. 
We do not know whether the converse reduction is possible.
Also Karzanov's strategy~\cite{Kar96} seems not to be applied to our generalization. 
Indeed, his strategy includes selection rules 
of type ``if $X \not \in {\cal S}$, then Red takes..."\cite[p. 222]{Kar96}.
The main aim of this note is to present an improved Red-win strategy for our generalization. 
\begin{Thm}\label{thm:main}
	For every skew-supermodular function $f$ and every input ${\cal F}$,
	there exists a strategy for Red to win after $O(|V|^3|{\cal F}|)$ iterations 
\end{Thm}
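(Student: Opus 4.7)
The plan is to prove Theorem~\ref{thm:main} by exhibiting a strategy for Red together with an integer-valued potential $\Phi({\cal F})$ which is bounded above by $O(|V|^{3}|{\cal F}|)$ on every initial input, vanishes on laminar families, and drops strictly after each round of the game.

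The cornerstone observation is this: for any crossing pair $(X,Y)$ and any admissible Red-move $(X',Y') \in \{(X \cap Y, X \cup Y),(X \setminus Y, Y \setminus X)\}$, neither $X'$ nor $Y'$ crosses $X$ nor $Y$ individually (each of $X',Y'$ is either contained in, contains, or is disjoint from each of $X$ and $Y$). Hence whichever set $Z \in \{X,Y\}$ Blue returns is automatically non-crossing with the two newly inserted sets, and the number of crossings in ${\cal F}$ incident to $Z$ strictly drops by at least one. This is the irreversible portion of each round that Red can exploit.

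Red's strategy, guided by this, is to designate a pivot $X^{\ast} \in {\cal F}$ of maximum cut-size $\phi(X) := |X|(|V|-|X|)$ among sets currently participating in some crossing, and always to play a pair $(X^{\ast}, Y)$ for some $Y$ crossing $X^{\ast}$. I propose the lexicographic potential
\[
\Phi({\cal F}) \;=\; \bigl(\phi_{\max}({\cal F}),\,N_{\max}({\cal F}),\,C_{\max}({\cal F})\bigr),
\]
where $\phi_{\max}$ is the maximum value of $\phi$ currently attained in ${\cal F}$, $N_{\max}$ counts the sets attaining $\phi_{\max}$, and $C_{\max}$ counts the crossings in ${\cal F}$ incident to such sets. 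If Blue returns $X^{\ast}$, the non-invasion property delivers a strict drop of $C_{\max}$ (the lost crossing with $Y$ cannot be replaced by a crossing of $X^{\ast}$ with $X'$ or $Y'$); if Blue instead returns $Y$, then the pivot leaves ${\cal F}$, $N_{\max}$ drops by one, and one must argue that the insertion of $X',Y'$ does not raise $\phi_{\max}$.

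The main obstacle is precisely this last claim. For a crossing pair $(X,Y)$ with $|X|,|Y|$ both slightly above $|V|/2$ and $|X \cap Y|$ close to $|V|/2$, one can have $\phi(X \cap Y)$ strictly greater than $\max(\phi(X),\phi(Y))$, and the skew-supermodularity of $f$ may force Red into the intersection/union branch in exactly this bad configuration. Overcoming this situation is the technical heart of the argument; the remedy I expect to use is to refine $\phi$, either via a suitable generic weighting of the pairs of $V$ or by extending the lexicographic tuple with a finer invariant of the multiset of sets, so that both uncrossing branches become strictly decreasing in the refined order. Once the monovariant is secured, the iteration count is controlled by summing the $O(|V|^{2})$ possible values of $\phi_{\max}$ against the $O(|{\cal F}|)$ bounds on $N_{\max}$ and $C_{\max}$ within each level, which after a careful amortised accounting yields the desired $O(|V|^{3}|{\cal F}|)$ bound.
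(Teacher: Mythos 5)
Your proposal is a potential-function argument, which is a genuinely different route from the paper, but it contains an unresolved gap at exactly the point you flag yourself. The claim that needs proving --- that Red can always uncross without increasing $\phi_{\max}$, or that some refinement of $\phi$ decreases under \emph{both} branches --- is left as ``the remedy I expect to use,'' and it is precisely where the whole difficulty of this game lives. In the Hurkens--Lov\'asz--Schrijver--Tardos setting Red always plays $(X\cap Y, X\cup Y)$ and can choose pairs to control a potential; here the skew-supermodularity of $f$ may \emph{force} Red into the branch that is bad for your monovariant (your own example with $|X|,|Y|$ slightly above $|V|/2$ and $|X\cap Y|$ near $|V|/2$ shows $\phi(X\cap Y)>\phi(X^{\ast})$ is possible, and nothing prevents $f$ from ruling out the difference branch there). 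No ``generic weighting of pairs'' is exhibited, and it is not clear one exists: any potential of the form $\sum_{i<j} w_{ij}$ over separated pairs faces the same obstruction, since for crossing $X,Y$ one of the two uncrossed pairs always has larger maximum ``width'' in some direction. Secondary steps are also unverified: when Blue returns $X^{\ast}$, the new sets $X',Y'$ do not cross $X^{\ast}$, but they may cross \emph{other} sets attaining $\phi_{\max}$, so $C_{\max}$ need not drop; and $N_{\max}$ can rise if $X'$ or $Y'$ attains $\phi_{\max}$, which is again the unproven claim.

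For contrast, the paper avoids any global potential. It first solves an explicit canonical configuration (A) --- two nested chains of intervals $[1,i]$ and $[2,j]$ on a linearly ordered ground set --- by using skew-supermodularity to locate the smallest index $k$ at which the preferred uncrossing direction flips from the difference branch to the union/intersection branch, and shows that whatever Blue returns, the quantity $n+|{\cal B}|$ (ground-set size plus the size of one chain) drops after $O(d)$ rounds. It then reduces the general input to repeated instances of (A) by peeling one set at a time off a laminar core, using the $2$-partitioned structure of a maximal crossing set against a laminar family. If you want to salvage your approach you would need to actually construct the refined monovariant; as written, the proof is incomplete.
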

The rest of the paper is organized as follows.
In Section~\ref{sec:proof}, we prove Theorem~\ref{thm:main}.
Our strategy basically follows Karzanov's one in high level, 
and incorporates nontrivial modifications in an essential part.
In Section~\ref{sec:implications}, 
we explain implications of this result on  
skew-supermodular cut-covering linear programs, 
which constitute an important subclass of linear programs 
appearing from a wide variety of network design problems in combinatorial optimization. 
As was noted by \cite{Kar96}  (for $\{0,1\}$-skew-supermodular cases), 
the uncrossing game naturally arises from 
the uncrossing procedure of dual solutions, 
and a Red-win strategy gives rise to a strongly polynomial time uncrossing algorithm.
Our strategy in Theorem~\ref{thm:main} 
is applicable to general skew-supermodular cut-covering linear programs 
beyond that treated in~\cite{Kar96}.
We also present an unexpected consequence on 
the optimality property of laminar dual solutions.

\section{Proof}\label{sec:proof}
Let ${\cal F}$ be a set of bi-partitions on $V$.
A member $X \in {\cal F}$ is said to be {\em trivial} (in ${\cal F}$)
if no member of ${\cal F}$ is crossing with $X$.
If $Z$ is not crossing with $X$ and $Y$, 
then it is not crossing with $X \cap Y$, $X \cup Y$, $X \setminus Y$, and $Y \setminus X$.
In particular, a trivial member remains trivial after uncrossings.
Therefore we can assume that 
a trivial member is removed from ${\cal F}$ whenever it appears.
Define an equivalence relation $\sim_{\cal F}$ on $V$ 
by $i \sim_{\cal F} j$ if there is no $X \in {\cal F}$ 
with $|\{i,j\} \cap X| = 1$.
An equivalence class of this relation is called an {\em atom}.
If $i \not \sim_{\cal F} j$, then $i$ and $j$ are said to be {\em separated}.
Obviously each $X \in {\cal F}$ is a disjoint union of atoms.
Therefore we can identify the ground set $V$ with the set of all atoms.
Notice that $i \sim_{\{X \cap Y, X \cup Y\}} j$ implies $i \sim_{\{X,Y\}} j$, and 
$i \sim_{\{X \setminus Y, Y \setminus X\}} j$ implies $i \sim_{\{X,Y\}} j$.
Therefore, 
if ${\cal F}'$ is obtained from ${\cal F}$ 
through iterations of the game, then $\sim_{{\cal F}'}$ coarsens  $\sim_{{\cal F}}$, 
i.e., $i \sim_{\cal F} j$ implies $i \sim_{{\cal F}'} j$.
By this fact together with the removal of trivial members, 
it happens that several atoms in ${\cal F}$ are joined to a single atom in ${\cal F}'$.
In this case, the cardinality of the ground set decreases.

\paragraph{1.} 
We first consider the essential case of input ${\cal F}$ to
which the argument in \cite{Kar96} is not directly applicable.
Let $V = \{1,2,\ldots,n\}$ $(n \geq 4)$. 
Suppose that ${\cal F}$ satisfies:
\begin{itemize}
	\item[(A)] 
	${\cal F}$ is the disjoint union of two subsets ${\cal A}$ and ${\cal B}$ 
	such that every member in ${\cal A}$ takes form $\{1,2,\ldots, i\}$ 
	for some $2 \leq i \leq n-2$, and every member of ${\cal B}$ takes form $\{2,3,\ldots, j\}$ 
	for some $3 \leq j \leq n-1$.
\end{itemize}
In the following, we denote $\{i,i+1,i+2,\ldots,j \}$ by $[i,j]$.
We may assume that ${\cal B} \neq \emptyset$.
Let $d (\geq 3)$ be the minimum number 
for which $[2,d]$ belongs to ${\cal B}$.
We can assume that $[1,i]$ belongs to ${\cal A}$ for $i=2,3,\ldots,d-1$; 
otherwise some $i$ and $i+1$ are not separated, and are joined to a single element.
We give a strategy for Red 
to keep the game of form (A) 
and to decrease $n + |{\cal B}|$ after $O(d)$ iterations.

Initially, Red evaluates $f([1,d-1]) + f([2,d])$, $f(\{1\}) + f(\{d\})$, and $f([2,d-1]) + f([1,d])$.  
Suppose that $f([1,d-1]) + f([2,d]) \leq f(\{1\}) + f(\{d\})$ holds.
Red chooses $X = [1,d-1], Y = [2,d]$, chooses $X' = X \setminus Y = \{1\}$, 
and $Y' = Y \setminus X = \{d\}$, and
replaces $X,Y$ by $X',Y'$; both $X',Y'$ are singletons (trivial) and vanish.
If Blue returns $X = [1,d-1]$, then $|{\cal B}|$ decreases.
If Blue returns $Y = [2,d]$, then $|{\cal B}|$ does not change, 
$d$ and $d-1$ are not separated, and hence $n$ decreases.

Suppose that $f([1,d-1]) + f([2,d]) > f(\{1\}) + f(\{d\})$ holds. 
By skew-supermodularity, $f([1,d-1]) + f([2,d]) \leq f([2,d-1]) + f([1,d])$ must hold.
Suppose that $d=3$. 
Red chooses $X = [1,2]$, $Y = [2,3]$,  $X' = X \cup Y = [1,3]$, and $Y' = X \cap Y = \{2\}$.
Then $\{2\}$ vanishes; in particular $|{\cal B}|$ does not increase.
If Blue returns $X = [1,2]$, 
then $|{\cal B}|$ decreases.
If Blue returns $Y = [2,3]$, then $2$ and $3$ are not separated, and $n$ decreases.

Suppose that $d > 3$. 
Red computes the smallest $k \in [2, d -1]$ such that 
\begin{equation}
f([1,l]) + f([2,l+1]) \leq   
f([2,l]) + f([1,l+1]) \quad (l = k,k+1,\ldots, d-1). \label{eqn:2}
\end{equation}
Such an index $k$ actually exists since $f([1,d-1]) + f([2,d]) \leq f([2,d-1]) + f([1,d])$. 
If $k > 2$, then it holds $f([1,k-1]) + f([2, k]) >  f([2, k-1]) + f([1,k])$, 
and by skew-supermodularity, it holds
\begin{equation}\label{eqn:1}
f([1,k-1]) + f([2,k]) \leq f(\{1\}) + f(\{k\}). 
\end{equation}
Also, in the case of $k=2$, (\ref{eqn:1}) holds 
since $f(\{1\}) + f(\{2\}) \leq f(\{1\}) + f(\{2\})$.
Adding inequalities (\ref{eqn:2}), we obtain
\begin{equation*}
f([1,k]) + f([2,d]) \leq f([1,d]) + f([2,k]).
\end{equation*}
Red chooses $X = [1,k]$, $Y = [2,d]$, $X' = X \cup Y = [1,d]$, and $Y' = X \cap Y = [2,k]$, and
replaces $X,Y$ by $X',Y'$.
If Blue returns $X = [1,k]$, 
then $|{\cal B}|$ does not change, $d$ decreases, and Red goes to the initial stage above.
Suppose that Blue returns $Y = [2,d]$.
If $k=2$, then $Y'$ is a singleton and vanishes, and $2$ and $3$ are not separated; 
hence $n + |{\cal B}|$ decreases.
Suppose that $k > 2$.
In the next iteration, $|{\cal B}|$ increases by one, and $d$ becomes $k$.
By (\ref{eqn:1}),
Red chooses $X = [1,k-1]$, $Y = [2,k]$, 
$X' = X \setminus Y = \{1\}$, and $Y' = Y \setminus X = \{k\}$.
Both $X'$ and $Y'$ are singletons and vanish in the next iteration.
If Blue returns $[1,k-1]$, 
then ${\cal B}$ decreases by one, 
$k$ and $k+1$ are not separated (since now $[1,k]$ does not exist), 
and $n + |{\cal B}|$ is smaller than that in two iterations before.
If Blue returns $[2,k]$, 
then $k-1$ and $k$ are not separated, 
$n + |{\cal B}|$ is equal to that in two iterations before,  $d = k$ is smaller than before, and
Red goes to the initial stage above.

Summarizing, by using the above strategy, 
Red can keep the game of form (A) 
and decrease $n + |{\cal B}|$ after $O(d)$ iterations.
Thus Red wins after $O(n^2)$ iterations.
We remark that the above strategy is 
easily adapted for the case where Blue is allowed to return none of $X,Y$.

\paragraph{2.} The rest of arguments is exactly the same as that given in \cite{Kar96}, and is sketched as follows.
A subset $X \subseteq V$ is said to be {\em $2$-partitioned} for ${\cal F}$ 
if $X$ intersects at most two atoms with respect to ${\cal F}$.
Suppose that 
\begin{itemize}
	\item[(B)] 
the input ${\cal F}$ is the disjoint union of two laminar families ${\cal C}$ and ${\cal D}$
such that for each $X \in {\cal C}$, $X$ or $V \setminus X$ is $2$-partitioned for ${\cal D}$.
\end{itemize}
In this case, for any $X \in {\cal C}$ the family ${\cal D} \cup \{X\}$ satisfies the condition (A) 
after removing trivial members in ${\cal D} \cup \{X\}$.
Indeed, ${\cal D} \cup \{X\}$ consists of $X$ 
and members $Y_1,Y_2,\ldots,Y_m$ in ${\cal D}$ crossing with $X$.
We can assume that $X$ is $2$-partitioned for ${\cal D}$.
By this condition, $X$ is the disjoint union of $Z_1$ and $Z_2$ 
such that $X \cap Y_{i} \in \{Z_1, Z_2\}$ for $i=1,2,\ldots,m$.
We can assume that both $Z_1$ and $Z_2$ are nonempty, 
and $Z_2 = X \cap Y_i$ for each $i$ (by $Y_i \leftrightarrow V \setminus Y_i$). 
By the laminarity of ${\cal D}$ together with $\emptyset \neq Z_1 \subseteq V \setminus (Y_i \cup Y_j)$ and $\emptyset \neq Z_2 \subseteq Y_i \cap Y_j$, we have $Y_i \subset Y_j$ or $Y_j \subset Y_i$ for $i \neq j$. 
By rearranging them, 
we have $Z_2 \subset Y_1 \subset Y_2 \subset \cdots \subset Y_m$.  
Atoms are $Z_1$, $Z_2$, $Y_1 \setminus Z_2$, $V \setminus (Y_{m} \cup Z_1)$, and
$Y_{i+1}\setminus Y_{i}$ for $i=1,2,\ldots,m-1$. 
Thus the situation reduces to (A) in the setting of 
${\cal A} = \{ [1,2]\}$, ${\cal B} = \{ [2,j] \mid 3 \leq j \leq m+2\}$, and $n = m+3$.

The strategy for Red is the following.
Red chooses {\em maximal} $\{X, V \setminus X\} \in {\cal C}$ in the sense that
$X$ is $2$-partitioned for ${\cal D}$
and there is no $\{Y, V \setminus Y\} \in {\cal C}$ such that 
$X \subset Y$ (proper inclusion) and $Y$ is 2-partitioned for ${\cal D}$. 
As above, the family ${\cal D} \cup \{X\}$ satisfies the condition (A) 
(after removing trivial members).
Therefore Red plays the game within ${\cal D} \cup \{X\}$ 
and obtains a laminar family ${\cal D}'$ by the above strategy after $O(|V|^2)$ iterations.
Since ${\cal D}'$ is obtained from ${\cal D} \cup \{X\}$ by uncrossings,  $\sim_{{\cal D}'}$ coarsens $\sim_{{\cal D} \cup \{X\}}$ 
(see the beginning of Section~\ref{sec:proof}).
By this fact together with the maximality of $X$ and the laminarity of ${\cal C}$, the union of
${\cal C}' := {\cal C} \setminus \{X\}$ and ${\cal D}'$ satisfies the condition~(B). 
Let ${\cal C} \leftarrow {\cal C}'$ and  ${\cal D} \leftarrow {\cal D}'$. 
Red repeats the same procedure for ${\cal C}$ and ${\cal D}$, and wins after $|{\cal C}|$ steps.
The total number of iterations is $O(|{\cal C}| |V|^2) = O(|V|^3)$; 
recall that the size of any laminar family on $V$ is $O(|V|)$.

Suppose finally that ${\cal F}$ is arbitrary.
Red chooses a (maximal) laminar subset ${\cal C}$ of ${\cal F}$, 
and chooses an arbitrary $X \in {\cal B} := {\cal F} \setminus {\cal C}$.
Then the union of ${\cal C}$ and ${\cal D} = \{X\}$ obviously satisfies~(B).
Red applies the above strategy for ${\cal C} \cup {\cal D}$, and obtains a laminar family ${\cal C}'$ 
after $O(|V|^3)$ iterations.
Let ${\cal C} \leftarrow {\cal C}'$ and ${\cal B} \leftarrow {\cal B} \setminus \{X\}$.
Red repeats the same procedure for ${\cal C}$ and ${\cal B}$ (until ${\cal B} = \emptyset$), 
and wins after $O(|{\cal F}|)$ steps.
The total number of iterations is $O(|V|^3|{\cal F}|)$, as required.

\section{Implications}\label{sec:implications}
As mentioned in Introduction, 
the uncrossing game abstracts 
the uncrossing procedure arising from a class of cut-covering linear programs.
Let $G = (V,E)$ be an undirected graph with edge-cost $a:E \to \RR_+$, 
and let $f: {\cal S}(V) \to \RR_+$ be a skew-supermodular function.
The {\em skew-supermodular cut-covering LP} is the following linear program:
Minimize the cost $\sum_{e \in E} a(e) x(e)$ over all edge-weights $x:E \to \RR_+$ satisfying
the covering constraint:
\[
\sum_{e \in \delta X} x(e) \geq f(X) \quad (X \in {\cal S}(V)),
\]
where $\delta X$ denotes the set of edges $e = ij \in E$ with $i \in X$ and $j \not \in X$.
This class of LP and its variation capture a wide variety of network design problems 
and their fractional relaxations.
Examples include matching, T-join, network synthesis, survivable network, 
traveling salesman, Steiner tree/forest, connectivity augmentation and so on; see, e.g., \cite{Jain01,Schrijver}.
An important feature of this LP is that its dual always admits a {\em laminar} optimal solution.
The dual LP is given as:
Maximize $\sum_{X \in {\cal S}(V)} \lambda(X) f(X)$ over all $\lambda: {\cal S}(V) \to \RR_+$ satisfying
\[
\sum_{X \in {\cal S}(V): e \in \delta X} \lambda(X) \leq a(e) \quad (e \in E).
\] 
A feasible solution $\lambda$ is called {\em laminar} 
if its nonzero support 
${\cal F}(\lambda) := \{ X \in {\cal S}(V) \mid \lambda(X) \neq 0\}$ is laminar.
Then there always exists a laminar optimal solution.
This useful property has played key roles in algorithm design and analysis:
Edmonds' blossom algorithm~\cite{Edmonds65} for weighted matching works with a laminar dual solution, 
which enables us to avoid keeping exponential number of inequalities/variables for matching polytope.
Jain's iterative rounding algorithm~\cite{Jain01} for survivable network was obtained
by analyzing the larminarity property of skew-supermodular covering LPs.
The existence of a laminar dual solution can be seen from the following standard uncrossing argument.
Let $\lambda$ be an arbitrary feasible solution.
Suppose that ${\cal F}(\lambda)$ is not laminar.
Choose a crossing pair $(X,Y)$ in ${\cal F}(\lambda)$, and choose $(X',Y') \in 
\{(X \cap Y, X \cup Y), (X \setminus Y, Y \setminus X) \}$ 
with $f(X) + f(Y) \leq f(X') + f(Y')$.
Decrease $\lambda$ by $\alpha := \min \{\lambda(X), \lambda(Y)\}$ on $X$ and $Y$, 
and increase $\lambda$ by $\alpha$ on $X'$ and $Y'$.
The objective value does not decrease (by skew-supermodularity) and
the feasibility is also preserved.
This operation is called an {\em uncrossing}.
For rational $\lambda$, 
we obtain a laminar solution after a finite number of uncrossings, 
where $\lambda$ is said to be {\em uncrossed}.

The uncrossing process 
gives rise to the uncrossing game with input ${\cal F} = {\cal F}(\lambda)$, 
as mentioned in \cite{Kar96} (under the setting of a cross-closed family).
Red chooses a crossing pair $X,Y$ in ${\cal F}$, and replaces $X,Y$ by $X',Y'$ in ${\cal F}$.
Blue returns one $\tilde X$ of $X,Y$ 
for $\lambda(\tilde X) > \min \{\lambda(X), \lambda(Y)\}$; 
Blue returns none of them if $\alpha = \lambda(X) = \lambda(Y)$.
Then ${\cal F} = {\cal F}(\lambda)$ holds in the next iteration.
Suppose that $\lambda$ is integer-valued. 
Observe that $\sum_{X \in {\cal S}(V)} |X||V \setminus X| \lambda(X)$ strictly decreases in one uncrossing.
Therefore, in any choices of uncrossing pairs, 
this process terminates after $O( |V|^2 |{\cal F}(\lambda)| \| \lambda\|)$ iterations,  
where $\|\lambda\| := \max_{X \in {\cal S}(V)} \lambda(X)$.
By using the Red-win strategy in Theorem~\ref{thm:main}, 
we can conduct the uncrossing procedure in time polynomial in $|V|$ and $|{\cal F}(\lambda)|$, 
not depending on the bit length of $\lambda$. Thus we have:
\begin{Thm}\label{thm:uncrossing}
	Any $\lambda:{\cal S}(V) \to \RR_+$ can be uncrossed 
	in time polynomial of $|V|$ and~$|{\cal F}(\lambda)|$.
\end{Thm}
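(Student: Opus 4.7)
The plan is to run the uncrossing procedure that precedes the statement while letting Red's strategy from Theorem~\ref{thm:main} choose both the crossing pair and the uncrossing direction at each step, so that the evolution of the support ${\cal F}(\lambda)$ is exactly a play of the uncrossing game.

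Concretely, starting from the given feasible $\lambda$, at each iteration I would set ${\cal F} := {\cal F}(\lambda)$, and if ${\cal F}$ is not laminar, apply Red's strategy from Theorem~\ref{thm:main} to obtain a crossing pair $(X,Y) \in {\cal F}$ and $(X',Y') \in \{(X \cap Y, X \cup Y), (X \setminus Y, Y \setminus X)\}$ with $f(X)+f(Y) \leq f(X')+f(Y')$. Then I would perform the standard LP uncrossing: with $\alpha := \min\{\lambda(X),\lambda(Y)\}$, decrease $\lambda$ by $\alpha$ on $X$ and $Y$ and increase it by $\alpha$ on $X'$ and $Y'$. By the discussion before the theorem, this preserves feasibility and leaves the dual objective unchanged. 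After the update, the new support equals $({\cal F} \setminus \{X,Y\}) \cup \{X',Y'\} \cup \{\tilde X\}$, where $\tilde X$ is the member of $\{X,Y\}$ with $\lambda$-value strictly exceeding $\alpha$, and no such $\tilde X$ is present when $\lambda(X)=\lambda(Y)$.

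This dynamics is precisely that of the uncrossing game with Blue forced to play ``return the heavier of $X,Y$, or return neither in case of a tie''. Consequently, Theorem~\ref{thm:main} (invoked with the adaptation, already remarked at the end of paragraph~1 of Section~\ref{sec:proof}, in which Blue is allowed to return none of $X,Y$) guarantees that the family ${\cal F}(\lambda)$ becomes laminar within $O(|V|^3|{\cal F}(\lambda)|)$ iterations of the outer loop. Each such iteration uses only a polynomial number of oracle calls to $f$ to execute Red's strategy, together with a constant number of arithmetic updates to $\lambda$, so the total running time is polynomial in $|V|$ and $|{\cal F}(\lambda)|$, independent of the bit length of $\lambda$.

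The essential content of the theorem is in decoupling the iteration count from $\|\lambda\|$; once this viewpoint is taken, the only genuine point of care is the tie case $\lambda(X)=\lambda(Y)$, since without it the standard Blue move from the game is well-defined. This is precisely why the remark allowing Blue to pass in Section~\ref{sec:proof} is needed, and with it in hand the proof is a direct translation of Theorem~\ref{thm:main}.
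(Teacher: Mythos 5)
Your proposal is correct and is essentially identical to the paper's own argument: the paper likewise identifies each LP uncrossing with a game iteration in which Blue returns the member of $\{X,Y\}$ whose $\lambda$-value strictly exceeds $\alpha$ (or passes on a tie), and then invokes the Red-win strategy of Theorem~\ref{thm:main} together with the remark that it tolerates Blue passing, yielding an iteration bound independent of $\|\lambda\|$. One immaterial slip: an uncrossing step need not leave the dual objective \emph{unchanged} --- skew-supermodularity only guarantees it does not decrease --- but this does not affect the claim being proved.
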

This is a natural extension of \cite[Theorem 2]{Kar96} to general skew-supermodular functions.
In~\cite[Section 3.3]{HP}, we found that
the Red-win strategy of the uncrossing game also brings 
an interesting optimality property of laminar solutions, 
where we proved this property 
for $\{0,1\}$-valued skew-supermodular functions
(as a consequence of Karzanov's uncrossing algorithm).
Now we can state and prove this for general skew-supermodular functions.
\begin{Prop}
	Let $\lambda$ be a nonoptimal laminar feasible function.
	For any $\epsilon > 0$,
	there exists a laminar feasible solution $\lambda^*$ such that 
	$\|\lambda - \lambda^*\| \leq \epsilon$ and 
	the objective value of $\lambda^*$ is greater than that of $\lambda$.
\end{Prop}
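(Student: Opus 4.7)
Since $\lambda$ is nonoptimal, fix a feasible $\mu:{\cal S}(V)\to\RR_+$ whose objective value strictly exceeds that of $\lambda$. For $\delta\in(0,1]$, set
\[
\lambda_\delta := (1-\delta)\lambda + \delta\mu.
\]
As a convex combination of feasible solutions, $\lambda_\delta$ is feasible; its objective value strictly exceeds that of $\lambda$ (by linearity); and ${\cal F}(\lambda_\delta)={\cal F}(\lambda)\cup{\cal F}(\mu)$ is a family whose size is bounded independently of $\delta$. The plan is to run on $\lambda_\delta$ the polynomial uncrossing procedure of Theorem~\ref{thm:uncrossing} (driven by Red's strategy from Theorem~\ref{thm:main}), yielding a laminar feasible $\lambda^\star_\delta$ whose objective value is at least that of $\lambda_\delta$, hence strictly above that of $\lambda$, and then to show $\|\lambda^\star_\delta-\lambda\|\to 0$ as $\delta\to 0$. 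Choosing $\delta$ small enough then produces the required $\lambda^\star := \lambda^\star_\delta$.

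To establish the convergence, let $\lambda_\delta=\lambda^{(0)},\lambda^{(1)},\ldots,\lambda^{(T)}=\lambda^\star_\delta$ be the intermediate dual solutions, and $\alpha_t$ the mass transferred at step $t$; by Theorem~\ref{thm:main}, $T\le T_{\max}:=O(|V|^3(|{\cal F}(\lambda)|+|{\cal F}(\mu)|))$ uniformly in $\delta$. Track the ``outside mass''
\[
\Omega(t) := \sum_{X\notin{\cal F}(\lambda)} \lambda^{(t)}(X),
\]
whose initial value is $\Omega(0)=\delta\sum_{X\notin{\cal F}(\lambda)}\mu(X)=O(\delta)$. The decisive observation is that the laminarity of ${\cal F}(\lambda)$ forces any crossing pair $(X_t,Y_t)$ selected by Red to contain at least one member outside ${\cal F}(\lambda)$; consequently $\alpha_t=\min\{\lambda^{(t)}(X_t),\lambda^{(t)}(Y_t)\}\le\Omega(t)$. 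A single uncrossing decreases $\Omega$ by at least $\alpha_t$ (from that outside set in $\{X_t,Y_t\}$) and increases it by at most $2\alpha_t$ (from $\{X'_t,Y'_t\}$), so $\Omega(t+1)\le\Omega(t)+\alpha_t\le 2\Omega(t)$. Iterating gives $\Omega(T)\le 2^{T_{\max}}\Omega(0)=O(\delta)$, with hidden constant depending on $|V|,{\cal F}(\lambda),\mu$ but not on $\delta$.

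The same chain of inequalities yields $\sum_{t<T}\alpha_t\le\sum_{t<T}\Omega(t)=O(\delta)$, so for every $Z\in{\cal S}(V)$ the cumulative change $|\lambda^{(T)}(Z)-\lambda^{(0)}(Z)|\le\sum_t\alpha_t=O(\delta)$; combined with $\|\lambda^{(0)}-\lambda\|\le\delta\|\mu-\lambda\|$, this gives $\|\lambda^\star_\delta-\lambda\|=O(\delta)$, which is at most $\epsilon$ once $\delta$ is small enough. The main technical point is the invariant $\Omega(t+1)\le 2\Omega(t)$: it depends critically on the laminarity of ${\cal F}(\lambda)$ (which is what yields $\alpha_t\le\Omega(t)$), and the amplification $2^{T_{\max}}$, though large, is kept finite and independent of $\delta$ precisely by the polynomial iteration bound of Theorem~\ref{thm:main}. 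Without either ingredient, the convergence $\lambda^\star_\delta\to\lambda$ could fail; with both in hand, the proof goes through.
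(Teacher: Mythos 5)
Your proof is correct and follows essentially the same route as the paper: perturb $\lambda$ along a segment toward a better feasible solution, uncross with Red's strategy, and control the drift via the key observation that laminarity of ${\cal F}(\lambda)$ forces at least one member of every crossing pair outside ${\cal F}(\lambda)$, so the transferred mass can at most double per step and the uniform iteration bound $N$ from Theorem~\ref{thm:main} caps the total amplification at $2^{O(N)}$. The only (cosmetic) difference is that you track the total outside mass $\Omega(t)$ while the paper tracks the invariant $\min_{Z\in{\cal F}(\lambda)}\lambda^k(Z)\ge 2^k\epsilon'\ge\max_{Z\notin{\cal F}(\lambda)}\lambda^k(Z)$; both yield the same $2^N\epsilon'$ bound.
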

\begin{proof}
	The following proof method is due to \cite{HP}.
	We can assume that $\lambda \neq 0$.
	We can choose a positive integer $N$ such that every $\lambda:{\cal S}(V) \to \RR_+$ 
	can be uncrossed by at most $N$ uncrossings 
	in the strategy of Theorem~\ref{thm:main}.
	Choose $\epsilon' > 0$.
	Since $\lambda$ is not optimal, 
	there exists a feasible (not necessarily laminar) solution $\lambda'$ 
	such that $\|\lambda - \lambda'\| \leq \epsilon'$ and the objective value of $\lambda'$ is greater than that of $\lambda$; by convexity, 
	$\lambda'$ can be taken from the segment between $\lambda$ and an optimal solution.
	Apply the uncrossing procedure to $\lambda'$ according to the strategy of Theorem~\ref{thm:main}.
	Then we obtain a laminar feasible solution $\lambda^*$ 
	with the objective value not less than that of~$\lambda'$.

    We show that $\|\lambda - \lambda^*\| \leq \epsilon$ if $\epsilon'$ is sufficiently small. 
    Let $\lambda^k$ denote $\lambda'$ after $k$ uncrossings; $\lambda^0 = \lambda'$.
	Choose $\epsilon'$ with $\epsilon' \leq 2^{- N} \min_{X \in {\cal F}(\lambda)} \lambda(X)$.  
	Then it holds
	\begin{equation}\label{eqn:ast}
	\min_{Z \in {\cal F}(\lambda)} \lambda^k(Z) 
	\geq \min_{Z \in {\cal F}(\lambda)} \lambda(Z) - 2^k \epsilon' \geq 2^{k} \epsilon' \geq \max_{Z \not \in {\cal F}(\lambda)} \lambda^k(Z) \quad (k=0,1,2,\ldots,N-1).
	\end{equation}
	We show (\ref{eqn:ast}) by induction on $k$. 
	In the case of $k=0$, this indeed holds (by $\|\lambda - \lambda'\| \leq \epsilon'$). 
	The second inequality is obvious from the definition of $\epsilon'$.
	Suppose that (\ref{eqn:ast}) holds for $k < N-1$.
		Each uncrossing step chooses $X,Y$ 
		so that at least one of $X,Y$ does not belong to laminar family ${\cal F}(\lambda)$.
	By (\ref{eqn:ast}), in the $(k+1)$-th uncrossing, 
	$\alpha (= \min \{\lambda(X), \lambda(Y)\})$ 
	is attained at ${\cal S}(V) \setminus {\cal F}(\lambda)$ and is bound by $2^k \epsilon'$. 
	Thus $\min_{Z \in {\cal F}(\lambda)} \lambda^{k+1}(Z) \geq  \min_{Z \in {\cal F}(\lambda)} \lambda^{k}(Z) - 2^{k} \epsilon' \geq 2^{k+1} \epsilon'$, and 
	$\max_{Z \not \in {\cal F}(\lambda)} \lambda^{k+1}(Z) \leq \max_{Z \not \in {\cal F}(\lambda)} \lambda^{k}(Z) + 2^{k} \epsilon' \leq 2^{k+1} \epsilon'$.
	Thus (\ref{eqn:ast}) holds after the $(k+1)$-th uncrossing.
	
	In particular, $\alpha$ is attained at ${\cal S}(V) \setminus {\cal F}(\lambda)$ 
	in every step $k \leq N$, and $\|\lambda^{k+1} - \lambda^k\| \leq 2^{k} \epsilon'$.
	Hence we have $\|\lambda^* - \lambda'\| \leq \sum_{k=1}^{N} \|\lambda^k - \lambda^{k-1}\| \leq \sum_{k=1}^N 2^{k-1} \epsilon' = (2^N -1) \epsilon'$, and 
	$\|\lambda - \lambda^*\| \leq \|\lambda - \lambda'\| + \|\lambda^* - \lambda'\| \leq 2^N \epsilon'$.  Thus, by choosing $\epsilon'$ as 
	$\epsilon' \leq 2^{- N} \min \{\epsilon, \min_{X \in {\cal F}(\lambda)} \lambda(X)\}$,
	we obtain the desired result.
\end{proof}
From the view of convexity,
this local-to-global optimality property is obvious without the laminarity requirement.
The set of all laminar feasible solutions is not convex 
in the space of all functions on ${\cal S}(V)$.
Nevertheless this proposition says that 
the objective behaves convex or unimodal
over the space of laminar feasible solutions.
This may suggest a laminarity-preserving primal-dual algorithm,  like Edmonds' blossom algorithm,
for general skew-supermodular cut-covering LPs.

\section*{Acknowledgments}
We thank the referees for helpful comments.
The work was partially supported by JSPS KAKENHI Grant Numbers 25280004, 26330023, 26280004.


\begin{thebibliography}{1}
\small
%
%
\bibitem{Edmonds65}
J. Edmonds, 
Maximum matching and a polyhedron with 0,1-vertices,
{\em Journal of Research of the National Bureau of Standards} {\bf 69B} (1965), 125--130. 

\bibitem{HP}
H. Hirai and G. Pap,
Tree metrics and edge-disjoint S-paths, 
{\em Mathematical Programming, Series A} {\bf 147} (2014), 81--123.


\bibitem{HLST87}
C. A. J. Hurkens, L. Lov\'asz, A. Schrijver, and \'E. Tardos,
How to tidy up your set-system?
in:{\em Combinatorics} 
(Proceedings Seventh Hungarian Colloquium on Combinatorics, Eger, 1987;
A. Hajnal. L. Lov\'asz, V.T.S\'os, eds.), 
North-Holland, Amsterdam, 1988, pp. 309--314.

\bibitem{Jain01}
K. Jain, 
A factor 2 approximation algorithm for the generalized Steiner network problem, 
{\em Combinatorica} {\bf 21} (2001), 39--60.

\bibitem{Kar96}
A. V. Karzanov,
How to tidy up a symmetric set-system by use of uncrossing operations,
{\em Theoretical Computer Science} {\bf 157} (1996), 215--225.
%
\bibitem{Schrijver}
A. Schrijver, {\em Combinatorial Optimization},
Springer-Verlag, Berlin, 2003.
%



\end{thebibliography}
\end{document}